\declaretheorem[name=Definition,style=definition,qed=$\dashv$,
numberwithin=section]{dfn}
\declaretheorem[name=Theorem,style=plain,sibling=dfn]{tm}
\declaretheorem[name=Lemma,style=plain,sibling=dfn]{lem}
\declaretheorem[name=Claim,style=plain,numbered=no]{clm*}
\declaretheorem[name=Question,style=definition,sibling=dfn]{ques}
\newcommand{\om}{\omega}
\newcommand{\rest}{\upharpoonright}
\newcommand{\inter}{\cap}
\newcommand{\ZF}{\mathsf{ZF}}
\newcommand{\ZFR}{\mathsf{ZFR}}
\newcommand{\OR}{\mathrm{OR}}
\newcommand{\sats}{\models}
\newcommand{\Ll}{\mathcal{L}}
\newcommand{\crit}{\mathrm{crit}}
\newcommand{\sub}{\subseteq}
\newcommand{\HC}{\mathrm{HC}}
\newcommand{\com}{\circ}
\newcommand{\elem}{\preccurlyeq}
\begin{document}
\title{A weak reflection of Reinhardt  by super Reinhardt cardinals}
\author{Farmer Schlutzenberg\footnote{Funded by the Deutsche 
Forschungsgemeinschaft (DFG, German Research
Foundation) under Germany's Excellence Strategy EXC 2044-390685587,
Mathematics M\"unster: Dynamics-Geometry-Structure.}\\
farmer.schlutzenberg@gmail.com}

\maketitle
\begin{abstract}
We prove a weakened version of the reflection of Reinhardt cardinals by super 
Reinhardt cardinals: Let $M=(V^M,P)$ be a countable model of
 second order set theory $\ZF_2$ (with universe $V^M$
 and classes $P$) which models ``$\kappa$ is 
super Reinhardt''. We show that there are unboundedly many $\mu<\kappa$
such that there
  is $j$ such that $(V^M,j)$ models $\ZF(j)+$``$\mu$ is Reinhardt, as witnessed 
by $j$''. In particular, $j\rest X\in V^M$ for all $X\in V^M$ (but we allow 
$j\notin P$).
\end{abstract}

Bagaria, Koellner and Woodin ask in \cite[Question 2]{woodin_koellner_bagaria}
whether super Reinhardt cardinals reflect Reinhardt cardinals;
that is, whether the least Reinhardt cardinal must be ${<}$ the 
least super Reinhardt.
We will not manage to answer that question, but we will prove 
an approximation which seems worth recording.

Note that the question is stated in second order set theory $\ZF_2$ (see 
\cite[p.~287]{woodin_koellner_bagaria}).
Recall 
that $\kappa$
is a Reinhardt 
cardinal iff $\kappa$ is the critical point of an elementary embedding $j:V\to 
V$,
and is super Reinhardt iff for every $\alpha\in\OR$ there
is an elementary $j:V\to V$ with critical point $\kappa$
and $j(\kappa)\geq\alpha$.

When we write $M=(N,P)\sats\ZF_2$, we follow \cite{woodin_koellner_bagaria},
so $M$ has sets in $N$ and classes in $P$, and $N\sats\ZF$, etc.
So here the logic is actually first-order.
The basic example is that if $\delta$ inaccessible then 
$(V_\delta,V_{\delta+1})\sats\ZF_2$, but we are interested in countable models 
below.

 $\ZFR$ denotes the first-order theory over the 
language $\Ll_{\ZFR}=\{{=},{\in},\tilde{j}\}$, with the $\ZF(j)$ axioms
(that is, like usual $\ZF$, but  Collection and Separation for 
all formulas over $\Ll_{\ZFR}$),
and the scheme asserting that $\tilde{j}:V\to V$ is elementary (actually it 
suffices to use the single axiom asserting that $\tilde{j}$ is 
$\Sigma_1$-elementary).

\section{Some reflection}

\begin{tm}\label{tm}
Let
$M=(N,P)\sats\ZF_2+$``$\kappa$ is super Reinhardt'',
and suppose that $N,P$ are countable.
 Then there is a set $X\sub\kappa$, with $X\in N$
 and $X$ of size $\kappa$ in $N$, such that for each $\mu\in X$ there is $j$
 such that $(N,j)\sats\ZFR$ and $\crit(j)=\mu$.
\end{tm}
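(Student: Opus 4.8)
The plan is to first squeeze out of super-Reinhardt-ness a strong \emph{local} reflection of $\ZFR$ around $\kappa$, then reflect it below $\kappa$ to produce the set $X$, and finally glue together a genuine self-embedding of $N$ using that $N$ is countable.

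First, the local witnesses at $\kappa$. Since $M\sats$``$\kappa$ is super Reinhardt'', for every $\alpha\in\OR^N$ there is $k\in P$ with $k:N\to N$ elementary, $\crit(k)=\kappa$ and $k(\kappa)>\alpha$; because $k\in P$ and $M\sats\ZF_2$, the structure $(N,k)$ satisfies $\ZF(k)$ (Separation and Collection for $\Ll_{\ZFR}$ are instances of $\ZF_2$-Separation and Collection with $k$ as a class parameter), and $k$ is genuinely elementary (as $\Sigma_1$-elementarity suffices, cf.\ the remark on $\ZFR$ above), so $(N,k)\sats\ZFR$ with $\crit(k)=\kappa$; this already handles $\mu=\kappa$. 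Applying the Reflection Theorem \emph{inside} the model $(N,k)\sats\ZF(k)$, one gets that $(N,k)$, hence $N$, satisfies: ``for unboundedly many $\lambda$ there is a set $e$ with $e:V_\lambda\to V_\lambda$ elementary, $(V_\lambda,{\in},e)\sats\ZFR$ and $\crit(e)=\kappa$'' (the witnesses $e=k\rest V_\lambda^N$, for $\lambda$ in a suitable club of $\OR^N$, work). Call this $\Phi(\kappa)$. With more care one wants the ``resplendent'' strengthening $\Psi(\kappa)$: there is such a local $\ZFR$-witness, and every local $\ZFR$-witness $(V_\lambda,{\in},e)$ for $\kappa$ together with any $\beta$ admits a larger one $(V_{\lambda'},{\in},e')$ with $\lambda'>\beta$, $e'\supseteq e$, and $(V_\lambda,{\in},e)\elem(V_{\lambda'},{\in},e')$ in $\Ll_{\ZFR}$ — for witnesses $e=k\rest V_\lambda^N$ the extension $k\rest V_{\lambda'}^N$ does this, and in general one absorbs $e$ into a restriction of such a $k$.

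Next, I reflect $\Psi$ below $\kappa$. Super Reinhardt cardinals are strongly reflecting (in particular $V_\kappa^N\elem_{\Sigma_n}N$ for all $n$, since $\kappa$ is extendible and far more), and, read with the set-satisfaction predicates made explicit, $\Psi$ is of bounded complexity. Hence for each $\gamma<\kappa$ the statement ``$\exists x\,(\gamma<x\wedge\Psi(x))$'', true in $N$ with witness $\kappa$, reflects into $V_\kappa^N$ and so holds of some $\mu\in(\gamma,\kappa)$ in $N$. Thus $X:=\{\mu<\kappa: N\sats\Psi(\mu)\}$ is definable over $N$, so $X\in N$; it is unbounded in $\kappa$, and therefore $|X|^N=\kappa$ since $\kappa$ is inaccessible in $N$. (Observe that $X$ is defined by a property internal to $N$, even though its members will witness something external to $M$.)

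Finally, fix $\mu\in X$ and build $j$. Since $N$ is countable, $\mathrm{cof}(\OR^N)=\om$; fix $\beta_n\nearrow\OR^N$. Using $\Psi(\mu)$, construct by recursion on $n<\om$ a strictly increasing sequence $\lambda_n$ with $\sup_n\lambda_n=\OR^N$, $\lambda_n>\beta_n$, and sets $e_n\in N$ with each $(V_{\lambda_n}^N,{\in},e_n)\sats\ZFR$, $\crit(e_n)=\mu$, $e_n\sub e_{n+1}$, and $(V_{\lambda_n}^N,{\in},e_n)\elem(V_{\lambda_{n+1}}^N,{\in},e_{n+1})$ in $\Ll_{\ZFR}$ — each step being an instance of $\Psi(\mu)$. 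Put $j:=\bigcup_n e_n$. Then $j:N\to N$ is a genuine $\Ll_{\in}$-elementary map, as the union of the elementary chain $\langle V_{\lambda_n}^N\rangle_{n<\om}$; it is amenable because $j\rest V_{\lambda_n}^N=e_n\in N$ and every set of $N$ lies in some $V_{\lambda_n}^N$; $\crit(j)=\mu$; and $(N,j)=\bigcup_n(V_{\lambda_n}^N,{\in},e_n)$ is the union of an $\Ll_{\ZFR}$-elementary chain of models of $\ZFR$, so $(N,j)\sats\ZFR$. Running this for every $\mu\in X$ completes the proof.

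I expect the main obstacle to be the resplendent strengthening $\Psi(\kappa)$ in the first step — specifically, showing that an \emph{arbitrary} local $\ZFR$-witness for $\kappa$ (not merely one of the form $k\rest V_\lambda^N$) extends to a larger one sitting above it elementarily, which is precisely what drives the recursion in the last step — together with checking that $\Psi$ is of low enough complexity to fall under the reflection available from super-Reinhardt-ness. A secondary, pervasive nuisance is that $N$ need not be $\om$-standard: ``$\ZFR$'' and the satisfaction predicates must be read internally in $N$, the Reflection-Theorem and Tarski--Vaught arguments should be carried out inside $N$ (or inside $(N,k)$) as far as possible, and only the final union $j=\bigcup_n e_n$ is taken externally, using the genuine countability of $N$ (the point being that $N$'s satisfaction predicate is correct on standard formulas, so each $(V_{\lambda_n}^N,{\in},e_n)$ genuinely models $\ZFR$ and the chain is genuinely $\Ll_{\ZFR}$-elementary). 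Note finally that $j$ is in general not a class of $M$: were it one, $\mu<\kappa$ would be an honest Reinhardt cardinal of $M$, which is exactly the conclusion we are not claiming.
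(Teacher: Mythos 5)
Your overall architecture (reflect a first‑order property of $\kappa$ down to unboundedly many $\mu<\kappa$, then use countability of $N$ to glue an $\om$-chain of local witnesses into a global $j$) is reasonable, and your first step --- obtaining local witnesses at $\kappa$ from restrictions $k\rest V_\lambda^N$ of a class embedding $k\in P$ and reflecting their existence below $\kappa$ --- is fine. But the proof has a genuine gap exactly where you suspect it: the universally quantified clause of $\Psi(\kappa)$, asserting that \emph{every} local $\ZFR$-witness $(V_\lambda,{\in},e)$ with $\crit(e)=\kappa$ extends to an arbitrarily high $(V_{\lambda'},{\in},e')$ with $e\sub e'$ and $(V_\lambda,{\in},e)\elem(V_{\lambda'},{\in},e')$. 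Your proposed justification --- ``one absorbs $e$ into a restriction of such a $k$'' --- is unsupported: there is no reason an arbitrary elementary $e:V_\lambda^N\to V_\lambda^N$ with critical point $\kappa$ should be the restriction of any class embedding of $N$, and nothing else is offered. The clause cannot simply be dropped, since the existential part $\Phi(\mu)$ alone gives cofinally many pairwise incoherent local witnesses, and your final recursion must extend \emph{whatever} $e_n$ it happens to hold at stage $n$; nor can it be weakened to witnesses ``of the special form $k\rest V_\lambda$,'' because that form refers to a proper-class object and so does not survive the first-order reflection to $\mu<\kappa$ --- indeed at $\mu$ there is no class embedding in $P$ with critical point $\mu$ available at all (that is essentially the open question the theorem skirts). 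So as written, $\Psi(\kappa)$ is unverified and the recursion producing $j=\bigcup_n e_n$ has no licence to continue past its first step.

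The paper's proof sidesteps this entirely, and the contrast is instructive. It needs only a \emph{single} set-sized witness $\pi:V_\kappa^N\to V_\kappa^N$ with $\crit(\pi)=\mu<\kappa$ (obtained by iterating a Reinhardt embedding $j$ finitely often until $k(\kappa)$ is fixed, restricting to $V_{k(\kappa)}$, and reflecting under $k$ --- this is where Lemma \ref{lem:eventually_fixed} enters). The stretching to all of $N$ is then done not by extending $\pi$ to maps with the \emph{same} critical point, but by applying a sequence of class embeddings $j_n\in P$ with critical points $\kappa=\kappa_0<\kappa_1<\cdots$ cofinal in $\OR^N$ (available by super Reinhardtness plus countability): setting $Q_{n+1}=j_n(Q_n)$, the coherence $Q_n\elem Q_{n+1}$ and $\pi_n\sub\pi_{n+1}$ are automatic from the elementarity of $j_n$ and the fact that $\crit(j_n)=\kappa_n$ equals the height of $Q_n$, while $\crit(\pi_\om)=\mu$ is preserved because $\mu<\kappa_0$. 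In other words, the ``extension step'' of the chain is always performed by a global embedding whose critical point lies \emph{above} everything built so far --- which super Reinhardtness supplies for free --- rather than by an extension property of arbitrary local witnesses at $\mu$, which is what your $\Psi$ would require and which is not known to hold. If you replace your resplendency clause by this stretching mechanism, the rest of your argument goes through.
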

 So $j\rest V_\alpha^N\in N$ for each $\alpha<\OR^N$,
but it seems that maybe $j\notin P$, so the theorem is not enough to conclude 
that some $\mu<\kappa$ is Reinhardt in $M$.

We begin with a standard definition and a simple lemma.

\begin{dfn}
 Let $\delta$ be a limit ordinal and $j:V_\delta\to V_\delta$ be  elementary.
 Write $j^1=j$, and for $j^{n+1}=j^n(j^n)$ be the $(n+1)$th iterate of $j$,
 for $n<\om$. Here for $A\sub V_\delta$ we define
 \[ j(A)=\bigcup_{\alpha<\delta}j(A\inter V_\alpha). \qedhere\]
\end{dfn}

\begin{lem}\label{lem:eventually_fixed}
Assume $\ZF$ and let $\delta$ be a limit and $j:V_\delta\to V_\delta$ be 
elementary.
 Then for each $n\in[1,\om)$:\footnote{We remark that more is established in 
\cite{reinhardt_non-definability}.}
 \begin{enumerate}[label=--]
  \item $j^n:V_\delta\to V_\delta$ is elementary,
  \item $j^{n+1}=j(j^n)$,
  \item for each $x\in V_\delta$, if $j^n(x)=x$ then $j^{n+1}(x)=x$.
  \end{enumerate}
  Moreover, for each $\alpha<\delta$ there is $n<\om$ such that 
$j^n(\alpha)=\alpha$.
\end{lem}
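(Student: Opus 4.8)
The plan is to prove Lemma~\ref{lem:eventually_fixed} by induction on $n$, proving the three bulleted items simultaneously, and then derive the ``Moreover'' clause from the third item together with an analysis of the critical sequence. The base case $n=1$ is trivial: $j^1=j$ is elementary by hypothesis, $j^2=j(j)=j^1(j^1)$ by definition, and if $j(x)=x$ then $j(j)(x)=j(j\rest V_\alpha)(x)=j(j\rest V_\alpha)(j(x))=j((j\rest V_\alpha)(x))=j(j(x))=j(x)=x$ for suitable $\alpha$, using that $j$ applied to the graph of $j\rest V_\alpha$ gives the graph of $j(j)\rest V_{j(\alpha)}$ and elementarity. (The key point underlying all of this is that $j(j)$ makes sense: $j$ restricted to each $V_\alpha$ is a set in $V_\delta$, so we can apply $j$ to it, and these cohere.)

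For the inductive step, suppose the three items hold for $n$. To see $j^{n+1}=j^n(j^n)$ is elementary: since $j^n:V_\delta\to V_\delta$ is elementary (inductive hypothesis) and $j^n$ is itself ``an elementary embedding $V_\delta\to V_\delta$'' as witnessed level-by-level, applying the elementary map $j^n$ to (the levels of) $j^n$ yields another elementary embedding; more carefully, $j^n(j^n)$ agrees with $j^n$ on a tail in the sense that its restrictions $j^n(j^n)\rest V_{j^n(\alpha)} = j^n(j^n\rest V_\alpha)$ are elementary-embedding-fragments by elementarity of $j^n$, and these cohere to give an elementary $j^{n+1}:V_\delta\to V_\delta$. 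Next, $j^{n+2} = j^{n+1}(j^{n+1})$ by definition of the iterates; the content is to check $j^{n+1}(j^{n+1}) = j(j^{n+1})$, i.e.\ that iterating by $j^{n+1}$ equals iterating by $j$, which is exactly the ``shift'' identity and follows from the inductive hypothesis $j^{n+1}=j(j^n)$ by applying $j^n$ and manipulating: this is the one genuinely fiddly computation. Finally, the third item for $n+1$: if $j^{n+1}(x)=x$, then applying the second item $j^{n+2}=j(j^{n+1})$ and the already-established base-case-style argument (if a map fixes $x$ then so does its own self-application, since $j^{n+1}$ is elementary) gives $j^{n+2}(x)=x$.

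For the ``Moreover'' clause, fix $\alpha<\delta$; I want $n$ with $j^n(\alpha)=\alpha$. Let $\kappa_0=\crit(j)$ (if $j$ is the identity there is nothing to prove) and let $\langle\kappa_m\rangle_{m<\om}$ be the critical sequence, $\kappa_{m+1}=j(\kappa_m)$, with supremum $\lambda\le\delta$. The standard fact (provable in $\ZF$, cf.\ Kunen-style arguments adapted to $V_\delta$) is that $\crit(j^n)=\kappa_0$ for all $n$ and that $j^n$ agrees with $j$ below $\kappa_0$ and moves points in $[\kappa_0,\lambda)$; the decisive observation is that $j^n\rest\kappa_1 = \mathrm{id}$ is false — rather, one shows $j^n(\kappa_0)=\kappa_n$ grows, but that for ordinals $\alpha$ below the relevant closure point the values $j^n(\alpha)$ stabilize. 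The cleanest route: show $j^{n}(\alpha)=\alpha$ whenever $\alpha<\crit(j^{n})$, and that $\sup_n\crit(j^n)\ge$ ... hmm — actually the right statement is that $\crit(j^{n+1})>\crit(j^n)$ is false in general, so instead I would argue: by item three, the fixed-point set of $j^n$ is increasing in $n$; it suffices to see $\bigcup_n\mathrm{Fix}(j^n)\cap\delta = \delta$. Suppose not, and let $\alpha$ be least not fixed by any $j^n$; then $\alpha>\crit(j)$ and $j^n(\alpha)>\alpha$ for all $n$, but $j^n$ is continuous at limits and below $\alpha$ every ordinal is eventually fixed, forcing (by a pigeonhole/elementarity argument on $j(\alpha)$ versus the sup of fixed points below $\alpha$) a contradiction.

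The main obstacle I anticipate is making the ``shift identity'' $j^{n+1}(j^{n+1})=j(j^{n+1})$ fully rigorous at the level of the coherence of the $V_\alpha$-fragments, and, for the ``Moreover'' clause, pinning down the exact induction that drives every ordinal below $\delta$ into some $\mathrm{Fix}(j^n)$ — the naive ``each $j^n$ has larger critical point'' is wrong, so the argument must instead exploit that $j^{n+1}=j(j^n)$ (hence $j^{n+1}(\alpha)=j(j^n\rest V_\beta)(\alpha)$ is controlled by where $j$ sends the fixed points of $j^n$), together with continuity, to run an induction on $\alpha<\delta$.
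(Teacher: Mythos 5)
There is a genuine gap, and it sits exactly at the heart of the lemma: the ``Moreover'' clause is never actually proved. Your treatment of the three bulleted items is fine in outline (the paper dismisses them as clear), but for the ``Moreover'' part you set up the least counterexample $\alpha$ and then appeal to ``a pigeonhole/elementarity argument on $j(\alpha)$ versus the sup of fixed points below $\alpha$'' without supplying it; you flag this yourself as an unresolved obstacle. Along the way your picture of the critical points is off: from $j^{n+1}=j(j^n)$ one gets $\crit(j^{n+1})=j^n(\crit(j^n))$, so the critical points $\crit(j^n)$ \emph{do} strictly increase (they march up the critical sequence), with supremum $\lambda=\sup_m\kappa_m$. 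That observation disposes of all $\alpha<\lambda$, but says nothing about $\alpha\geq\lambda$ when $\lambda<\delta$, and ``$j^n$ is continuous at limits'' is false in general (elementary embeddings of $V_\delta$ are discontinuous at many limit ordinals), so continuity cannot carry the induction past $\lambda$.

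The missing idea is a packaging trick, which is essentially the paper's entire proof. Let $\alpha$ be the least ordinal not fixed by any $j^n$; then $\alpha<j(\alpha)$. Put $A_n=\{\beta<\alpha\mid j^n(\beta)=\beta\}$; by minimality of $\alpha$ we have $\alpha=\bigcup_{n<\om}A_n$. The whole sequence $\left<A_n\right>_{n<\om}$ is a single element of $V_\delta$, so $j$ applies to it, and since $\om<\crit(j)$ elementarity gives $j(\alpha)=\bigcup_{n<\om}j(A_n)$; hence $\alpha\in j(A_n)$ for some $n$. Now item two is used decisively: $j(A_n)=\{\beta<j(\alpha)\mid j(j^n)(\beta)=\beta\}=\{\beta<j(\alpha)\mid j^{n+1}(\beta)=\beta\}$, so $j^{n+1}(\alpha)=\alpha$, a contradiction. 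Your closing remark that one should ``exploit where $j$ sends the fixed points of $j^n$'' points at exactly this, but without forming the sets $A_n$, applying $j$ to the $\om$-sequence of them, and reading off $j(A_n)$ as the fixed-point set of $j^{n+1}$ below $j(\alpha)$, the proof is not there.
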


\begin{proof}
The first 3 items are clear. Now suppose the ``moreover'' clause is false and 
let $\alpha$ be the least counterexample. Then $\alpha+\om<j(\alpha)<\delta$.
For each $\beta<\alpha$ there is some $n$ such that $j^n(\beta)=\beta$.
Let
\[ A_n=\{\beta<\alpha\bigm|j^n(\beta)=\beta\}.\]
Then $\left<A_n\right>_{n<\om}\in V_\delta$, and note 
$\alpha=\bigcup_{n<\om}A_n$. So by elementarity,
\[ j(\alpha)=j(\bigcup_{n<\om}A_n)=\bigcup_{n<\om}j(A_n), \]
and since $\alpha<j(\alpha)$, therefore $\alpha\in j(A_n)$ for some $n$. But 
note
\[ j(A_n)=\{\beta<j(\alpha)\bigm|j^{n+1}(\beta)=\beta\}, \]
so $j^{n+1}(\alpha)=\alpha$, a contradiction.
\end{proof}

\begin{proof}[Proof of Theorem \ref{tm}]
 Work for the moment inside $M=(N,P)$.
\begin{clm*}
 There are unboundedly many $\mu<\kappa$
 such that there is an elementary $\pi:V_\kappa\to V_\kappa$
 with $\crit(\pi)=\mu$.
\end{clm*}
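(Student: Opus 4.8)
The plan is to work inside $M$ and use the super Reinhardtness of $\kappa$ twice: once to get an ordinary Reinhardt embedding $j\colon V\to V$ with which to reflect, and a second time --- via its ``tallness'' --- to feed that reflection an elementary self-embedding of $V_{j(\kappa)}$. So fix an elementary $j\colon V\to V$ with $\crit(j)=\kappa$. The basic observation is that $j\rest V_\kappa\colon V_\kappa\to V_{j(\kappa)}$ is simply the inclusion map, and it is elementary; hence $V_\kappa\elem V_{j(\kappa)}$, and iterating, $V_\kappa\elem V_{j(\kappa)}\elem V_{j^2(\kappa)}\elem\cdots\elem V_\lambda$ where $\lambda=\sup_{n<\om}j^n(\kappa)$. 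What I actually use is the slightly stronger fact that $j\rest V_{\kappa+\om}\colon V_{\kappa+\om}\to V_{j(\kappa)+\om}$ is elementary with critical point $\kappa$, carries the set $V_\kappa$ to the set $V_{j(\kappa)}$, and fixes every ordinal below $\kappa$.

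Let $C=\{\mu<\kappa\mid\text{there is an elementary }\pi\colon V_\kappa\to V_\kappa\text{ with }\crit(\pi)=\mu\}$; the Claim is that $C$ is unbounded in $\kappa$. Now $C$ is definable over $V$ from the parameter $\kappa$ (via $V_\kappa$, the satisfaction relation of $(V_\kappa,{\in})$, and a quantifier bounded to the set $V_{\kappa+3}$ of candidate embeddings), so applying $j$ yields $j(C)=\{\nu<j(\kappa)\mid\text{there is an elementary }\pi\colon V_{j(\kappa)}\to V_{j(\kappa)}\text{ with }\crit(\pi)=\nu\}$; and for each $\mu<\kappa$, applying $j\rest V_{\kappa+3}$ to ``there is an elementary self-embedding of $V_\kappa$ with critical point $\mu$'' produces ``there is an elementary self-embedding of $V_{j(\kappa)}$ with critical point $\mu$''. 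Together these give $C=j(C)\inter\kappa$.

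Suppose toward a contradiction that $C\sub\gamma$ for some $\gamma<\kappa$. Then $C\in V_\kappa$, so $j(C)=C\sub\gamma$; that is, $V_{j(\kappa)}$ has no elementary self-embedding with critical point in $[\gamma,j(\kappa))$. It therefore suffices to exhibit a single elementary self-embedding of $V_{j(\kappa)}$ with critical point $\kappa$. This is the main obstacle --- the naive choice $j\rest V_{j(\kappa)}$ is useless, as it maps $V_{j(\kappa)}$ properly up into $V_{j^2(\kappa)}$ --- and it is where super Reinhardtness is genuinely needed. My plan here is to arrange matters from the start so that $j(\kappa)$ is a fixed point of some elementary $k\colon V\to V$ with $\crit(k)=\kappa$: begin with an arbitrary such $k$, pass to a fixed point of $k$ (using Lemma \ref{lem:eventually_fixed} to control the relevant critical sequences), and use super Reinhardtness to align this with the value $j(\kappa)$ of a Reinhardt embedding $j$; then $k\rest V_{j(\kappa)}\colon V_{j(\kappa)}\to V_{j(\kappa)}$ is elementary with critical point $\kappa$, the desired contradiction. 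Hence $C$ is unbounded in $\kappa$, which is the Claim.

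I expect this last alignment step to be the real work, and if pinning down $j(\kappa)$ proves awkward I would instead argue via $V_\kappa\elem V$ (available for super Reinhardt $\kappa$): $V$ has an ordinal $\lambda$ carrying an elementary self-embedding of $V_\lambda$ with critical point below $\lambda$ (take $\lambda=\sup_n j^n(\kappa)$ and $j\rest V_\lambda$), so there are cofinally many such $\lambda<\kappa$; the remaining difficulty is to promote a self-embedding of one of these small $V_\lambda$'s to one of $V_\kappa$ with the same critical point --- a reversal of the ``$j$-transfer'' $\pi\mapsto\bigcup_n j^n(\pi)$, which turns a self-embedding of $V_\kappa$ into one of $V_\lambda$ --- and this reversal is, I suspect, the technical crux in either approach.
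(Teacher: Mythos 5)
Your reflection framework is sound: $C=j(C)\inter\kappa$ is correct, and if $C\sub\gamma<\kappa$ then $C\in V_\kappa$ gives $j(C)=C$, so it would indeed suffice to produce a single elementary self-embedding of $V_{j(\kappa)}$ with critical point in $[\gamma,j(\kappa))$. But that production step --- which you yourself flag as ``the real work'' --- is exactly where the proposal has a genuine gap, and the plan you sketch for it does not go through. You want $k:V\to V$ with $\crit(k)=\kappa$ and $k(j(\kappa))=j(\kappa)$, to be obtained by ``using super Reinhardtness to align'' $j(\kappa)$ with a fixed point of $k$. Super Reinhardtness only lets you push $j(\kappa)$ \emph{above} any prescribed ordinal; it gives no control to make $j(\kappa)$ \emph{equal} a prescribed ordinal, or to land it in the class of fixed points of a previously chosen $k$. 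Worse, you are demanding critical point exactly $\kappa$: an embedding with critical point exactly $\kappa$ that fixes $j(\kappa)$ is precisely what the paper could not produce either --- manufacturing fixed points by iterating (Lemma \ref{lem:eventually_fixed}) pushes the critical point up, which is the source of the open questions at the end of the paper. Your fallback via $V_\kappa\elem V$ has the same hole: it yields self-embeddings of various $V_\lambda$ with $\lambda<\kappa$, and, as you note, promoting these to self-embeddings of $V_\kappa$ with the same critical point is left unsolved.

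The repair is to reorder the choices and to drop the demand that the critical point be exactly $\kappa$. Fix \emph{first} the embedding to be iterated: any elementary $i:V\to V$ with $\crit(i)=\kappa$, together with a fixed point $\lambda>\kappa$ of $i$, so that the entire critical sequence of $i$ lies in $[\kappa,\lambda)$. \emph{Then} invoke super Reinhardtness to get $j$ with $\crit(j)=\kappa$ and $j(\kappa)>\lambda$. Taking an $i$-fixed point $\beta>j(\kappa)$ and $\ell=i\rest V_\beta$, Lemma \ref{lem:eventually_fixed} gives $n$ with $\ell^n(j(\kappa))=j(\kappa)$, and then $\ell^n\rest V_{j(\kappa)}$ is an elementary self-embedding of $V_{j(\kappa)}$ with critical point in $[\kappa,\lambda)\sub[\gamma,j(\kappa))$ --- enough for your contradiction. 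This is, up to renaming and taking the contrapositive, exactly the paper's proof: there the tall embedding is called $k$, and rather than arguing by contradiction with the set $C$, the existence of $\ell^n\rest V_{k(\kappa)}$ is reflected directly under $k$.
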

\begin{proof}
 Let $j:N\to N$ with $\crit(j)=\kappa$. Let $\lambda>\kappa$
 with $j(\lambda)=\lambda$. 
 Let $k:N\to N$ with $\crit(k)=\kappa$ and $k(\kappa)>\lambda$.
 Let $\beta>k(\kappa)$ be such that $j(\beta)=\beta$.
 Let $\ell:V_\beta\to V_\beta$ be $\ell=j\rest V_\beta$.
 Then $\ell^n=j^n\rest V_\beta$ for each $n<\om$,
 and by Lemma \ref{lem:eventually_fixed}, we can fix $n<\om$ such that 
$\ell^n(k(\kappa))=k(\kappa)$.
Let $\pi'=\ell^n\rest 
V_{k(\kappa)}$. Then $\pi':V_{k(\kappa)}\to V_{k(\kappa)}$ and 
\[ \kappa\leq\crit(\pi')=\crit(\ell^n)=\crit(j^n)<\lambda<k(\kappa).
\]
The existence of $\pi'$ now reflects under $k$ to yield some $\pi$ as desired.
\end{proof}

Let $X$ be the set of all $\mu$ as in the claim. By the claim, $X$ has size 
$\kappa$.

Now step out of $M$, back to $V$, where $M\in\HC$.
We claim that $X$ is as desired.
So let $\mu\in X$ and fix  $\pi\in N$ witnessing this. Note that 
$(V_\kappa^N,\pi)\sats\ZFR$,
since $\kappa$ is inaccessible in $N$. Let $\kappa_0=\kappa$ 
and $\pi_0=\pi$ 
and $Q_0=(V_{\kappa_0}^N,\pi_0)$.
Using super Reinhardtness (which is easily preserved under elementary $k:N\to 
N$ when $k\in P$), construct a sequence 
$\left<j_n\right>_{n<\om}$ of embeddings $j_n:N\to 
N$ with $j_n\in P$, such that $\crit(j_0)=\kappa_0$
and 
\[ \kappa_{n+1}=\crit(j_{n+1})=j_n\com\ldots\com j_0(\kappa_0), \]
and with $\sup_{n<\om}\kappa_n=\OR^N$. Let
$Q_{n+1}=j_n\com\ldots\com j_0(Q_0)$.
Note then that $Q_n\sats\ZFR$ and $Q_n\elem Q_{n+1}$,
and $Q_n$ has universe $V^N_{\kappa_n}$.
So let
\[ Q_\om=(N,\pi_\om)=\bigcup_{n<\om}M_n.\]
Then $Q_\om$ is the union 
of an elementary chain, so $Q_\om\sats\ZFR$,  $Q_\om$ has universe 
$N$, and $\crit(\pi_\om)=\crit(\pi)=\mu$.
\end{proof}

Note that
because we had to iterate the embedding
$\ell$ in order to fix $k(\kappa)$, the proof leaves open the 
questions:

\begin{ques}
With notation as above, is there some $j:N\to N$ with 
$j\in P$ and $\crit(j)=\kappa$,
such that letting $D$ be the normal measure on $\kappa$
derived from $j$, there is a $D$-measure one set of
 $\mu$ as before?
 
 Can one prove the same conclusion but without assuming that $M$
 is countable?
 
 And of course, can one get $j\in P$ with $\crit(j)<\kappa$?
 \end{ques}

 \bibliographystyle{plain}
\bibliography{../bibliography/bibliography}

\end{document}